\newtheorem{Theorem}{Theorem}[section]
\newtheorem{Proposition}[Theorem]{Proposition}
\newtheorem{Corollary}[Theorem]{Corollary}
\newtheorem{Lemma}[Theorem]{Lemma}
\theoremstyle{definition}
\newtheorem{Definition}[Theorem]{Definition}
\newtheorem{Remark}[Theorem]{Remark}
\newcommand\ph\varphi
\renewcommand\le\leqslant
\renewcommand\ge\geqslant
\newcommand\then\Rightarrow
\newcommand\into\hookrightarrow
\newcommand\otno\twoheadleftarrow
\newcommand\onto\twoheadrightarrow
\newcommand\impl{\,{\to}\,}
\newcommand\bimp{\,{\leftrightarrow}\,}
\newcommand\mb\mathbb
\newcommand\sD{\mathcal D}
\newcommand\sF{\mathcal F}
\newcommand\KM{\ensuremath{\bm{\mathsf{KM}}}}
\newcommand\IPC{\ensuremath{\bm{\mathsf{IPC}}}}
\newcommand\set[1]{\left\{#1\right\}}
\newcommand\setof[2]{\left\{#1\vphantom{#2}\,\right|\left.\vphantom{#1}#2\right\}}
\newcommand\brk[1]{\left\langle#1\right\rangle}
\newcommand{\dbtilde}[1]{\accentset{\approx}{#1}}
\title{An explicit Kuznetsov-Muravitsky enrichment}
\author{Mamuka Jibladze}
\email{mamuka.jibladze@tsu.ge}
\author{Evgeny Kuznetsov}
\address{Department of Mathematical Logic\\
Razmadze Mathematical Institute, TSU\\
Tbilisi 0193, Georgia}
\email{e.kuznetsov@freeuni.edu.ge}
\thanks{Research supported by the SRNSF Grant \#FR-22-6700}
\begin{document}

\maketitle

\section{Introduction}

Around 1978, A.~V.~Kuznetsov introduced the now famous modal extension of the Intuitionistic Propositional Calculus \IPC,
later thoroughly investigated by him and his devoted collaborator A.~Yu.~Muravitsky in a series of papers.
We will use for their system the by now well-established name \KM.

In the posthumously published paper \cite{Kuznetsov} Kuznetsov proved that, for any \IPC-formul\ae\ $A$, $B$,
\begin{equation*}\label{theokuz}
\KM+A\vdash B \iff\IPC+A\vdash B.
\tag{K}
\end{equation*}
His proof is essentially proof-theoretic, based on an inductive elimination of the modality from inferences.

On the other hand, from the very beginning Kuznetsov and Muravitsky studied their calculus in parallel with its algebraic semantics,
via what they called \emph{$\Delta$-pseudoboolean algebras}. In this paper we will call them \KM-algebras, following Leo Esakia,
who also has an important contribution to their study \cite{Esakia}.

A \KM-algebra is a Heyting algebra $(H,\land,\lor,\impl,0,1)$ together with a unary operation $\Delta$ satisfying the identities
\begin{align*}
x&\le\Delta x\\
\Delta x\impl x&=x\\
\Delta x&\le y\lor y\impl x.
\end{align*}

As established by Kuznetsov and Muravitsky, \eqref{theokuz} is equivalent to the statement that every variety of Heyting algebras
is generated by reducts of \KM-algebras. In fact they also showed that \eqref{theokuz} implies existence of an embedding of any Heyting algebra $H$
into a \KM-algebra generating the same variety of Heyting algebras as $H$.

As stressed by several people, including Muravitsky himself, it would be highly desirable to have a purely algebraic construction of such an embedding.
In \cite{Muravitsky} he provides one such construction, but his proof that the ambient algebra stays in the same variety is again essentially proof-theoretic,
based on a thorough analysis of derivations in various auxiliary modal intuitionistic calculi.

In this paper, an entirely different explicit embedding is described, manifestly remaining in the same variety.
We do not know whether the extended algebra obtained by us is isomorphic to the one from \cite{Muravitsky}.

Our approach is straightforward: given an element $a\in H$, we adjoin to $H$ a new element $\Delta(a)$ with desired properties freely \emph{in the variety of $H$},
obtaining an algebra $H{\brk{\Delta(a)}}$ with a homomorphism $H\to H{\brk{\Delta(a)}}$. In order to show that this homomorphism is an embedding and does not spoil any other $\Delta(b)$ that might already exist in $H$,
we give an explicit description of the algebra $H{\brk{\Delta(a)}}$. After that we proceed similarly to \cite{Muravitsky}, showing that one can consistently do this for every $a\in H$,
obtaining an embedding $H\to H{\brk\Delta}$ such that every $a\in H$ possesses a $\Delta(a)$ in $H{\brk\Delta}$.
Then, again as in \cite{Muravitsky} we iterate to obtain a countable chain of embeddings $H\to H{\brk\Delta}\to H{\brk\Delta}{\brk\Delta}\to\cdots$ and finally take the union
(that is, direct limit) of this chain, which then turns out to be a \KM-algebra. Thus the only difference of our construction from that in \cite{Muravitsky} is at the very first step.
The point is that, unlike what happens in \cite{Muravitsky}, it is immediate that this first step does not leave the variety of $H$, and this then obviously remains true at each iteration too.

In the next section we introduce necessary notation and gather some facts about Heyting and \KM-algebras that we will need.
In \S\ref{main}, the crucial step of our construction is described, and in \S\ref{final} we explain how to use it to achieve the desired embedding.
The last section addresses the relationship between our construction and that of \cite{Muravitsky}.

\subsection*{Acknowledgements}
The authors are grateful to Nick Bezhanishvili, Dato Gabelaia and Kote Razmadze for numerous helpful suggestions.

\section{Preliminaries}\label{prelim}

For an \IPC-formula $\ph(p_1,...,p_n)$, where $\{p_1,...,p_n\}$ is the set of all variables of $\ph$, and for elements $h_1,...,h_n\in H$ of a Heyting algebra $H$,
we will denote by $\ph(h_1,...,h_n)$ the value of $\ph$ under the valuation (in the algebraic semantics) interpreting the variable $p_i$ as $h_i$, $i=1,...,n$.

We will need the following
\begin{Proposition}\label{maintool}
Let $\ph(p_0,p_1,...,p_n)$ be any \IPC-formula, and let $h$, $x$, $x'$, $h_1$,..., $h_n$ be any elements of a Heyting algebra $H$. Then,
\[
\text{$h\land x=h\land x'$ \emph{implies} $h\land\ph(x,h_1,...,h_n)=h\land\ph(x',h_1,...,h_n)$.}
\]
\end{Proposition}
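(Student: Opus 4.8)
The plan is to reduce the statement to a single structural observation: the map $q\colon H\to{\downarrow}h$ onto the principal downset ${\downarrow}h=\setof{y\in H}{y\le h}$, given by $q(y)=h\land y$, is a \emph{surjective homomorphism of Heyting algebras}, provided ${\downarrow}h$ is equipped with the operations inherited from $H$ together with top element $h$ and relative pseudocomplement $(u\Rightarrow v)=h\land(u\impl v)$. (Surjectivity is clear since $q(z)=z$ for $z\le h$.) Once this is established, a straightforward induction on the construction of $\ph$ gives, for every \IPC-formula $\psi(p_0,\dots,p_n)$,
\[
q\bigl(\psi(y,h_1,\dots,h_n)\bigr)=\psi^{{\downarrow}h}\bigl(q(y),q(h_1),\dots,q(h_n)\bigr),
\]
where the right-hand side denotes evaluation in ${\downarrow}h$. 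Since this value depends on $y$ only through $q(y)=h\land y$, the hypothesis $h\land x=h\land x'$ immediately yields $h\land\ph(x,h_1,\dots,h_n)=h\land\ph(x',h_1,\dots,h_n)$.

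Next I would check that $q$ is indeed a Heyting homomorphism. Preservation of $0$ and $1$ (with $q(1)=h$ now the top), and of $\land$ and $\lor$, is immediate from distributivity and monotonicity of $y\mapsto h\land y$. The one identity that takes a short argument is
\[
h\land(a\impl b)=h\land\bigl((h\land a)\impl(h\land b)\bigr),
\]
that is, that $q$ preserves $\impl$. For the inclusion ``$\le$'' one notes that $(a\impl b)\land(h\land a)\le h\land b$, whence $a\impl b\le(h\land a)\impl(h\land b)$ and meeting with $h$ finishes it; for ``$\ge$'' one sets $c=h\land\bigl((h\land a)\impl(h\land b)\bigr)$ and uses $c\le h$ to rewrite $c\land a=c\land(h\land a)\le h\land b\le b$, so that $c\le a\impl b$ and hence $c\le h\land(a\impl b)$.

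I expect this implication identity to be the only real obstacle; the rest is bookkeeping. The base cases of the induction on $\ph$ are the two constants and the variables $p_1,\dots,p_n$, for which both sides of the desired equality literally coincide, and the variable $p_0$, for which the conclusion is exactly the hypothesis. One could equally well dispense with the language of ${\downarrow}h$ and run a direct induction showing that $h\land\ph(x,h_1,\dots,h_n)$ is determined by $h\land x$ and $h_1,\dots,h_n$, invoking the same four connective identities at the induction step; the homomorphism formulation merely packages them.
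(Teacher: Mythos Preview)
Your argument is correct but proceeds differently from the paper. The paper dispatches the statement in one line by invoking the \IPC-theorem
\[
(p\bimp p')\impl\bigl(\ph(p,p_1,\dots,p_n)\bimp\ph(p',p_1,\dots,p_n)\bigr);
\]
evaluating this in $H$ and noting that $h\land x=h\land x'$ is equivalent to $h\le x\bimp x'$ gives $h\le\ph(x,h_1,\dots,h_n)\bimp\ph(x',h_1,\dots,h_n)$, hence the conclusion. You instead stay entirely on the algebraic side, identifying $y\mapsto h\land y$ as a surjective Heyting homomorphism $H\onto{\downarrow}h$ and letting the homomorphism property absorb the induction on $\ph$. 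The verification that $q$ preserves $\impl$ is exactly right, and after that the result is immediate. Your route is more self-contained---no appeal to a syntactic fact about \IPC---and makes transparent that the statement is really a congruence property of the principal filter above $h$; the paper's route is terser but outsources the induction to the replacement theorem. The two are equivalent in substance: the \IPC\ replacement schema and the fact that $y\mapsto h\land y$ is a Heyting homomorphism are two expressions of the same underlying congruence.
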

\begin{proof}
This follows easily from the fact that
\[
(p\bimp p')\impl(\ph(p,p_1,...,p_n)\bimp\ph(p',p_1,...,p_n))
\]
is a theorem of \IPC.
\end{proof}

\begin{Proposition}\label{elem}
For elements $a,d\in H$ of a Heyting algebra $H$, the following are equivalent:
\begin{itemize}
\item $d$ is \emph{dense over $a$}, that is, $a\le d$ and $d\impl a=a$
\item $d\impl a\le d$
\item $d=h\lor h\impl a$ for some $h\in H$.
\end{itemize}
\end{Proposition}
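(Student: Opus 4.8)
The plan is to establish the three equivalences by running the cycle $\text{(first)}\then\text{(third)}\then\text{(second)}\then\text{(first)}$ (referring to the three bullets in the order listed), using nothing beyond the following elementary facts valid in any Heyting algebra $H$ for all $x,y,z$: the adjunction inequalities $x\le y\impl x$ and $x\land(x\impl y)\le y$, the distribution law $(x\lor y)\impl z=(x\impl z)\land(y\impl z)$, and the triviality that a meet lies below each of its conjuncts.

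For the step from the first condition to the third, suppose $a\le d$ and $d\impl a=a$; then $h:=d$ witnesses the third condition, since $h\lor(h\impl a)=d\lor(d\impl a)=d\lor a=d$, the last equality because $a\le d$. For the step from the third condition to the second, suppose $d=h\lor(h\impl a)$ for some $h\in H$; distributing implication over the join in the antecedent gives $d\impl a=(h\impl a)\land\bigl((h\impl a)\impl a\bigr)\le h\impl a\le h\lor(h\impl a)=d$.

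For the step from the second condition back to the first, suppose $d\impl a\le d$. Since $a\le d\impl a$ always holds, the hypothesis yields $a\le d$. Since $(d\impl a)\land d\le a$ always holds, while the hypothesis makes $(d\impl a)\land d=d\impl a$, we get $d\impl a\le a$; together with $a\le d\impl a$ this gives $d\impl a=a$, so $d$ is dense over $a$ and the cycle closes.

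I do not anticipate a genuine obstacle here: the whole argument is a short exercise in the arithmetic of Heyting implication. The only place that uses anything beyond the two defining adjunction inequalities of $\impl$ is the passage $\text{(third)}\then\text{(second)}$, which needs the distributivity of $\impl$ over $\lor$ in the antecedent; everything else is immediate.
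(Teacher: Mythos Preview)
Your proof is correct and complete: each implication in the cycle $(1)\Rightarrow(3)\Rightarrow(2)\Rightarrow(1)$ is verified with valid Heyting-algebra manipulations. The paper itself does not give a proof of this proposition at all---it simply cites \cite{Muravitsky}*{Proposition 7.2} as a reference---so there is no approach to compare against; your argument supplies exactly the kind of short self-contained verification one would expect for such a folklore fact.
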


This is well known; see e.~g. \cite{Muravitsky}*{Proposition 7.2}.

For an element $a\in H$ of a Heyting algebra $H$ we denote by $\sD_a(H)$ the subset of $H$ consisting of all elements dense over $a$. That is,
\[
\sD_a(H)=\setof{d\in H}{\text{$a\le d$ and $d\impl a=a$}}.
\]
It is well known (and straightforward to check) that $\sD_a(H)$ is a filter.

We recall from \cite{Muravitsky}:

\begin{Proposition}
A Heyting algebra $H$ is a reduct of a \KM-algebra if and only if for every $a\in H$ the filter $\sD_a(H)$ is principal.
\end{Proposition}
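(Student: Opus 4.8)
The plan is to establish both directions by unwinding the defining identities of a $\KM$-algebra in terms of the operation $a\mapsto\Delta(a)$ and the filters $\sD_a(H)$.

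First I would handle the forward direction. Suppose $H$ is the Heyting reduct of a $\KM$-algebra $(H,\Delta)$. Fix $a\in H$ and set $d=\Delta(a)$. The first two $\KM$-identities, $a\le\Delta a$ and $\Delta a\impl a=a$, say exactly that $d\in\sD_a(H)$. I claim $d$ generates $\sD_a(H)$, i.e. every $e\in\sD_a(H)$ satisfies $d\le e$. By Proposition~\ref{elem} applied to $e$, we may write $e=h\lor h\impl a$ for some $h\in H$; but then the third $\KM$-identity, $\Delta a\le y\lor y\impl a$ instantiated at $y=h$, gives $d=\Delta a\le h\lor h\impl a=e$, as desired. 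Hence $\sD_a(H)$ is principal, generated by $\Delta(a)$, for every $a$.

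For the converse, suppose $\sD_a(H)$ is principal for every $a\in H$; let $\Delta(a)$ denote its least element (which lies in $\sD_a(H)$ since a principal filter contains its generator). I must verify the three identities. The first two, $a\le\Delta a$ and $\Delta a\impl a=a$, hold because $\Delta(a)\in\sD_a(H)$ by construction. For the third, fix $y\in H$ and put $e=y\lor y\impl a$. By the third bullet of Proposition~\ref{elem}, $e\in\sD_a(H)$, so by minimality of $\Delta(a)$ we get $\Delta(a)\le e=y\lor y\impl a$. This gives all three identities, so $(H,\Delta)$ is a $\KM$-algebra with Heyting reduct $H$.

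I do not anticipate a serious obstacle here: the proof is essentially a translation lemma, and the content is entirely carried by Proposition~\ref{elem}, which supplies the equivalence between "$d$ dense over $a$" and "$d=h\lor h\impl a$ for some $h$" — precisely the bridge between the filter-theoretic statement and the last $\KM$-identity. The only point requiring a line of care is that a principal filter is generated by an element \emph{of that filter}, so that $\Delta(a)$ itself is dense over $a$; this is immediate. I would also remark that the argument shows the stronger fact that on a $\KM$-algebra $\Delta(a)$ is forced to be the least element of $\sD_a(H)$, so the $\Delta$-operation, if it exists, is unique — a fact worth recording for the sequel, where the construction freely adjoins exactly such least elements.
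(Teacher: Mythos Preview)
Your proof is correct and matches the paper's approach: the paper does not give a full proof (it recalls the statement from \cite{Muravitsky}), but its one-line remark that ``the \KM-axioms precisely enforce $\Delta a$ to be the smallest element of $\sD_a(H)$'' is exactly what you have unpacked, with Proposition~\ref{elem} supplying the bridge between the third \KM-identity and membership in $\sD_a(H)$. Your closing observation on uniqueness of $\Delta$ is also explicitly noted in the paper and used tacitly thereafter.
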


In fact it is easy to see that the \KM-axioms precisely enforce $\Delta a$ to be the smallest element of $\sD_a(H)$ for every $a\in H$.

Using this, we will tacitly denote the smallest element of $\sD_a(H)$, when it exists, by $\Delta_H(a)$.
Moreover, we will say ``$\Delta_H(a)$ exists in $H$'' to mean ``$\sD_a(H)$ is principal'', and ``$x=\Delta_H(a)$'' to mean ``$x$ is the smallest element of $\sD_a(H)$''.

We will also need
\begin{Lemma}\label{eqlemma}
For any Heyting algebra $H$ and any $x,h,a\in H$,
\[
x\impl((h\impl a)\impl h) = ((x\impl h)\impl a)\impl(x\impl h).
\]
\end{Lemma}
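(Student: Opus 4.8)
The plan is to verify the identity directly in an arbitrary Heyting algebra, using only the residuation adjunction $z\le x\impl y\iff z\land x\le y$ together with the monotonicity of $\impl$ in its second argument and its antitonicity in the first. The first move is to simplify the left-hand side: since $u\impl(v\impl w)=(u\land v)\impl w=v\impl(u\impl w)$ holds in every Heyting algebra, we have
\[
x\impl((h\impl a)\impl h)=(h\impl a)\impl(x\impl h).
\]
Writing $p:=x\impl h$ for brevity, the assertion becomes $(h\impl a)\impl p=(p\impl a)\impl p$, and it suffices to prove the two inequalities between $(h\impl a)\impl p$ and $(p\impl a)\impl p$ separately.

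For $(h\impl a)\impl p\le(p\impl a)\impl p$: from $h\land x\le h$ we get $h\le x\impl h=p$, hence $p\impl a\le h\impl a$ by antitonicity, and then $(h\impl a)\impl p\le(p\impl a)\impl p$ by antitonicity once more.

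For the reverse inequality $(p\impl a)\impl p\le(h\impl a)\impl p$: by the adjunction this is equivalent to $\bigl((p\impl a)\impl p\bigr)\land(h\impl a)\le p$, and, since $p=x\impl h$, to $\bigl((p\impl a)\impl p\bigr)\land(h\impl a)\land x\le h$. The key step — and the only place where a little ingenuity is called for — is the observation that $(h\impl a)\land x\le p\impl a$; this follows from $(h\impl a)\land x\land p=(h\impl a)\land x\land(x\impl h)\le(h\impl a)\land h\le a$ via the adjunction. Granting it, $\bigl((p\impl a)\impl p\bigr)\land(h\impl a)\land x\le\bigl((p\impl a)\impl p\bigr)\land(p\impl a)\le p$, and intersecting once more with $x$ gives $\bigl((p\impl a)\impl p\bigr)\land(h\impl a)\land x\le p\land x=(x\impl h)\land x\le h$, as required. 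Combining the two inequalities yields the identity.

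The main obstacle here is not a genuine difficulty but simply spotting the right auxiliary inequality $(h\impl a)\land x\le(x\impl h)\impl a$; after that everything is a mechanical application of residuation. (Alternatively, one could note that the stated equality is equivalent to a propositional bi-implication derivable in \IPC\ and invoke completeness of the algebraic semantics, but the direct computation above is shorter and self-contained.)
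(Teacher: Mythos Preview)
Your argument is correct. The only cosmetic wrinkle is the phrase ``intersecting once more with $x$'': the conjunct $x$ is already present on the left, so what you really use is that the left-hand side is simultaneously $\le p$ and $\le x$, hence $\le p\land x\le h$. No harm done.

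Your route differs from the paper's. You first curry the left-hand side to $(h\impl a)\impl(x\impl h)$, set $p:=x\impl h$, and then prove the two inequalities between $(h\impl a)\impl p$ and $(p\impl a)\impl p$ separately via monotonicity/antitonicity and the adjunction; the nontrivial direction hinges on the auxiliary inequality $(h\impl a)\land x\le p\impl a$. The paper instead gives a single chain of equalities transforming the right-hand side into the left-hand side, repeatedly using the identities $u\impl(x\impl h)=(u\land x)\impl h$, $(u\impl v)\land x=((u\land x)\impl v)\land x$, and $(x\impl u)\land x=u\land x$. The paper's approach is purely equational and avoids splitting into inequalities; yours is more order-theoretic and makes the role of $h\le x\impl h$ explicit. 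Both are short and self-contained, and neither is clearly superior.
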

\begin{proof}
We transform the right hand side as follows:
\begin{align*}
((x\impl h)\impl a)\impl(x\impl h)
&=(((x\impl h)\impl a)\land x)\impl h\\
&=((((x\impl h)\land x)\impl a)\land x)\impl h\\
&=(((h\land x)\impl a)\land x)\impl h\\
&=((x\impl(h\impl a))\land x)\impl h\\
&=((h\impl a)\land x)\impl h\\
&=x\impl((h\impl a)\impl h).
\end{align*}
\end{proof}

\begin{Corollary}\label{eqD}
For any Heyting algebra $H$ and any $x,h,a\in H$, one has $x\impl h\in\sD_a(H)$ if and only if $x\le(h\impl a)\impl h$.
\end{Corollary}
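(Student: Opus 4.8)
The plan is to unfold the definition of $\sD_a(H)$ and apply Proposition~\ref{elem}. Recall that $x\impl h\in\sD_a(H)$ means by definition that $x\impl h$ is dense over $a$, i.e. $a\le x\impl h$ and $(x\impl h)\impl a=a$. By Proposition~\ref{elem}, the latter conjunction is equivalent to the single inequality
\[
(x\impl h)\impl a\le x\impl h.
\]
So the task reduces to showing that this inequality holds if and only if $x\le(h\impl a)\impl h$.

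First I would use the residuation (exponential) adjunction: $x\le(h\impl a)\impl h$ is equivalent to $x\land(h\impl a)\le h$, which in turn is equivalent to $h\impl a\le x\impl h$. Thus Corollary~\ref{eqD} will follow once I establish that
\[
(x\impl h)\impl a\le x\impl h \iff h\impl a\le x\impl h.
\]
The $(\Leftarrow)$ direction is immediate from antitonicity of $\impl$ in its first argument: since $h\le x\impl h$ is always true in a Heyting algebra, applying $(-)\impl a$ reverses it to $(x\impl h)\impl a\le h\impl a$, and composing with the assumed inequality $h\impl a\le x\impl h$ gives the conclusion.

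For the $(\Rightarrow)$ direction I would invoke Lemma~\ref{eqlemma}, which gives the identity $((x\impl h)\impl a)\impl(x\impl h) = x\impl((h\impl a)\impl h)$. Indeed $(x\impl h)\impl a\le x\impl h$ says exactly that the left-hand side of this identity equals $1$, hence the right-hand side equals $1$, i.e. $x\le(h\impl a)\impl h$, which is the desired conclusion; and conversely this chain is reversible, so in fact Lemma~\ref{eqlemma} alone already delivers both directions at once. The only mild subtlety — and the step most worth stating carefully — is the passage from the pair of conditions defining density to the single inequality, which is precisely the content of Proposition~\ref{elem}; everything else is a direct application of Lemma~\ref{eqlemma} together with the fact that an element equals $1$ iff the relevant $\le$ holds. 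So I would structure the proof as: (i) rewrite membership in $\sD_a(H)$ via Proposition~\ref{elem} as $(x\impl h)\impl a\le x\impl h$; (ii) note this is equivalent to $((x\impl h)\impl a)\impl(x\impl h)=1$; (iii) apply Lemma~\ref{eqlemma} to rewrite this as $x\impl((h\impl a)\impl h)=1$; (iv) conclude $x\le(h\impl a)\impl h$. I expect no real obstacle here — the corollary is essentially a repackaging of Lemma~\ref{eqlemma} in the language of $\sD_a$.
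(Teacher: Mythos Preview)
Your proposal is correct and, once you strip away the exploratory detour through residuation and antitonicity, the final argument (i)--(iv) is exactly the paper's proof: both sides of the equivalence are recognized as the condition that the respective sides of the identity in Lemma~\ref{eqlemma} equal $1$, with Proposition~\ref{elem} supplying the passage from membership in $\sD_a(H)$ to the single inequality $(x\impl h)\impl a\le x\impl h$.
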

\begin{proof}
The conditions to be proved equivalent correspond to the left, resp. right hand side of the equality in Lemma \ref{eqlemma} to be $1$.
\end{proof}

\begin{Proposition}\label{future}
Let $f:H\to H'$ be a homomorphism of Heyting algebras, let $\sF$ be a filter of $H$, and let $\brk{f(\sF)}$ be the filter of $H'$ generated by $f(\sF)$.
Then, if $\sF$ is principal, $\brk{f(\sF)}$ is principal too, and if $b$ is the smallest element of $\sF$ then $f(b)$ is the smallest element of $\brk{f(\sF)}$.
\end{Proposition}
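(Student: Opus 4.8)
The plan is to use the standard description of the filter generated by a subset, together with the fact that a homomorphism of Heyting algebras preserves finite meets (in particular the top element) and is therefore monotone.

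First I would recall that for an arbitrary subset $S\subseteq H'$, the filter $\brk S$ consists of exactly those $y\in H'$ for which $y\ge s_1\land\cdots\land s_k$ for some finite list $s_1,\dots,s_k\in S$, where the empty meet is understood to be $1$ (so $1\in\brk S$ in any case). Applying this with $S=f(\sF)$, an element $y$ lies in $\brk{f(\sF)}$ if and only if $y\ge f(x_1)\land\cdots\land f(x_k)$ for some $x_1,\dots,x_n\in\sF$. Since $f$ preserves finite meets, the right-hand side equals $f(x_1\land\cdots\land x_k)$, and since $\sF$ is a filter it is closed under finite meets, so $x_1\land\cdots\land x_k\in\sF$. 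Hence $\brk{f(\sF)}$ is nothing but the upward closure of $f(\sF)$, namely $\setof{y\in H'}{y\ge f(x)\text{ for some }x\in\sF}$.

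Now I would assume $\sF$ is principal with smallest element $b$, so that $\sF=\setof{x\in H}{x\ge b}$. By the description just obtained, $f(b)\in f(\sF)\subseteq\brk{f(\sF)}$. Conversely, for any $y\in\brk{f(\sF)}$ there is $x\in\sF$ with $y\ge f(x)$, and $x\ge b$ gives $f(x)\ge f(b)$ by monotonicity of $f$, whence $y\ge f(b)$. Therefore $f(b)$ is the least element of $\brk{f(\sF)}$, which in particular shows that $\brk{f(\sF)}$ is principal, generated by $f(b)$, as claimed.

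I do not expect a genuine obstacle here; the only point that needs a little care is that $f(\sF)$ itself is generally not a filter of $H'$ — it is closed under finite meets but typically fails to be upward closed — which is precisely why one passes to the generated filter $\brk{f(\sF)}$ and then checks that the adjoined upper elements do not alter the least element.
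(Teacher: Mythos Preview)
Your proof is correct and follows essentially the same route as the paper's: both identify $\brk{f(\sF)}$ explicitly as the upward closure $\setof{h'\in H'}{\exists h\in\sF\ f(h)\le h'}$ and then observe that $f(b)\le f(h)\le h'$ for any such $h'$. The only difference is that you spell out in detail why the generated filter reduces to this upward closure (via preservation of finite meets), whereas the paper simply states the description.
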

\begin{proof}
Explicitly, $\brk{f(\sF)}=\setof{h'\in H'}{\exists h\in\sF\ f(h)\le h'}$.
Clearly $f(b)\in\brk{f(\sF)}$, and if $f(h)\le h'$ for some $h\in\sF$ then $f(b)\le f(h)\le h'$.
\end{proof}

\begin{Corollary}\label{presdelta}
For $a\in H$, let $f:H\to H'$ be a homomorphism such that for any $d'\in\sD_{f(a)}(H')$ there is a $d\in\sD_{a}(H)$ with $f(d)\le d'$.
Then, if $\Delta_H(a)$ exists, also $\Delta_{H'}(f(a))$ exists and $\Delta_{H'}(f(a))=f(\Delta_H(a))$.
\end{Corollary}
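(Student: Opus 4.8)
The plan is to deduce this from Proposition~\ref{future} applied to the filter $\sF=\sD_a(H)$. The key observation is that the hypothesis on $f$ is precisely the assertion that $\sD_{f(a)}(H')$ coincides with the filter $\brk{f(\sD_a(H))}$ generated by the image of $\sD_a(H)$; once that identification is in place, Proposition~\ref{future} delivers both the principality of $\sD_{f(a)}(H')$ and the formula for its least element.

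First I would check the inclusion $\brk{f(\sD_a(H))}\subseteq\sD_{f(a)}(H')$. For $d\in\sD_a(H)$ we have $a\le d$ and $d\impl a=a$; applying the Heyting homomorphism $f$, which preserves $\le$ and $\impl$, yields $f(a)\le f(d)$ and $f(d)\impl f(a)=f(a)$, so $f(d)\in\sD_{f(a)}(H')$. Since $\sD_{f(a)}(H')$ is a filter, hence upward closed, it contains every element lying above some $f(d)$, i.e.\ all of $\brk{f(\sD_a(H))}$.

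Next, the reverse inclusion $\sD_{f(a)}(H')\subseteq\brk{f(\sD_a(H))}$ is exactly the hypothesis: by the explicit description recorded in the proof of Proposition~\ref{future}, $\brk{f(\sD_a(H))}=\setof{h'\in H'}{\exists d\in\sD_a(H)\ f(d)\le h'}$, and saying that $d'$ belongs to this set is the same as saying there is $d\in\sD_a(H)$ with $f(d)\le d'$. Combining the two inclusions gives $\sD_{f(a)}(H')=\brk{f(\sD_a(H))}$.

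Finally I would invoke Proposition~\ref{future}: if $\Delta_H(a)$ exists then $\sD_a(H)$ is principal with least element $b=\Delta_H(a)$, so $\brk{f(\sD_a(H))}$ is principal with least element $f(b)$; in view of the identification just established, $\sD_{f(a)}(H')$ is principal with least element $f(\Delta_H(a))$, which is precisely the claim $\Delta_{H'}(f(a))=f(\Delta_H(a))$. I do not expect a genuine obstacle here; the only point requiring a little care is that the argument uses $f$ being a \emph{Heyting} homomorphism, so that $\impl$ is preserved and images of elements dense over $a$ are dense over $f(a)$ — a lattice homomorphism would not suffice.
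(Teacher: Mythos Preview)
Your proof is correct and follows exactly the paper's approach: the paper's one-line argument simply asserts that the hypothesis amounts to $\sD_{f(a)}(H')=\brk{f(\sD_a(H))}$ and then invokes Proposition~\ref{future}, which is precisely what you do. You have merely unpacked the equality by verifying both inclusions explicitly, including the (automatic) one that $f$ carries $\sD_a(H)$ into $\sD_{f(a)}(H')$.
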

\begin{proof}
The hypothesis just means that $\sD_{f(a)}(H')=\brk{f(\sD_a(H))}$, so this follows from Proposition \ref{future} above.
\end{proof}

\begin{Proposition}\label{nasty}
An element $a'\in H$ satisfies $a'=\Delta_H(a)$ (i.~e. $a'$ is the smallest element of $\sD_a(H)$) if and only if
\[
(a\impl h)\land((h\impl a)\impl a) = a'\impl h
\]
for all $h\in H$.
\end{Proposition}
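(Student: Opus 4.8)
The plan is to abbreviate the right-hand side, setting $g(h):=(a\impl h)\land((h\impl a)\impl a)$, so that what must be shown is: $a'$ is the least element of $\sD_a(H)$ if and only if $a'\impl h=g(h)$ for all $h\in H$. I would prove the two implications separately. For ``identity $\then$ $a'=\Delta_H(a)$'' it suffices to substitute three well-chosen values of $h$; for the converse I would split the equality $a'\impl h=g(h)$ into its two inequalities and treat them independently, the harder one by producing a suitable member of $\sD_a(H)$ and invoking minimality of $a'=\Delta_H(a)$.

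\emph{From the identity to $a'=\Delta_H(a)$.} Substituting $h=a$ collapses the right-hand side to $1\land(1\impl a)=a$, so $a'\impl a=a$. Using $a'\impl a=a$, substituting $h=a'$ gives $1=a'\impl a'=g(a')=(a\impl a')\land((a'\impl a)\impl a)=(a\impl a')\land 1=a\impl a'$, i.e. $a\le a'$; hence $a'\in\sD_a(H)$. For minimality, take any $d\in\sD_a(H)$ and substitute $h=d$: since $a\le d$ the first conjunct of $g(d)$ is $1$, and since $d\impl a=a$ the second is $a\impl a=1$, so $a'\impl d=1$, i.e. $a'\le d$. Thus $a'$ is the least element of $\sD_a(H)$.

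\emph{From $a'=\Delta_H(a)$ to the identity.} First, $a'\in\sD_a(H)$ already forces $a'\impl h\le g(h)$: the inequality $a'\impl h\le a\impl h$ is antitonicity of $\impl$ in its first argument applied to $a\le a'$, while $(a'\impl h)\land(h\impl a)\le a'\impl a=a$ gives $a'\impl h\le(h\impl a)\impl a$. For the reverse inequality the key observation is that $g(h)\impl h\in\sD_a(H)$. By Corollary \ref{eqD} with $x:=g(h)$ this amounts to $g(h)\le(h\impl a)\impl h$, which is the short computation $g(h)\land(h\impl a)\le(a\impl h)\land a\le h$. Then minimality of $a'$ yields $a'\le g(h)\impl h$, so $g(h)\land a'\le h$, that is $g(h)\le a'\impl h$; together with the first inequality this gives $a'\impl h=g(h)$.

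The substantive points are locating the right substitutions $h=a,\,a',\,d$ in the first direction and, in the second, realising that the element to feed into Corollary \ref{eqD} is $g(h)$ itself. The remaining manipulations—modus ponens $u\land(u\impl a)\le a$, transitivity $(p\impl q)\land(q\impl r)\le p\impl r$, and antitonicity—are routine identities of Heyting algebras, so I expect no real obstacle beyond those two observations.
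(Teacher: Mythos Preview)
Your proof is correct. The ``if'' direction via the substitutions $h=a,\,a',\,d$ unpacks exactly what the paper states in one line (``LHS${}=1$ iff $h\in\sD_a(H)$, RHS${}=1$ iff $a'\le h$''). For the ``only if'' direction the paper takes a slightly different, more uniform route: rather than splitting into two inequalities, it shows that for every $x$ one has $x\le\text{LHS}$ iff $x\impl h\in\sD_a(H)$ (via Proposition~\ref{elem} and Corollary~\ref{eqD}) and $x\le\text{RHS}$ iff $a'\le x\impl h$, and these are equivalent precisely when $a'=\Delta_H(a)$. Your argument is the specialisation $x=g(h)$ of the forward direction together with a separate verification of $a'\impl h\le g(h)$; it uses the same key input (Corollary~\ref{eqD}) and is equally valid, just a bit more hands-on where the paper is Yoneda-style.
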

\begin{proof}
The `if' part is clear: the left hand side is $1$ if and only if $h\in\sD_a(H)$ while the right hand side is $1$ if and only if $a'\le h$.

For the `only if' part, note that by Proposition \ref{elem} and Corollary \ref{eqD}, $x\le$ the left hand side iff $x\impl h\in\sD_{a}(H)$.
On the other hand, $x\le$ the right hand side iff $a'\le x\impl h$. And if $a'=\Delta(a)$ then these two must indeed be equivalent. 
\end{proof}

We will also need a fact from universal algebra in general. For it we recall the concept of extension by constants from an algebra.

\begin{Definition}\label{vaa}
Given an algebra $A$ in some variety $\mb V$, we denote by $\mb V_A$ the variety obtained by extending the signature of $\mb V$
with the set of constants (i.~e. operations of arity zero) $\setof{c_a}{a\in A}$ and adding to the identities of $\mb V$
the set of identities $t_1(c_{a_1},\dots,c_{a_n})=t_2(c_{a_1},\dots,c_{a_n})$
for every pair of terms $t_1$, $t_2$ of the same arity $n$ in the signature of $\mb V$ and every $a_1,\dots,a_n\in A$ such that $t_1(a_1,\dots,a_n)=t_2(a_1,\dots,a_n)$.

Moreover we denote by $\mb V_A(A)$ the subvariety of $\mb V_A$ generated by the algebra $(A,(a)_{a\in A})$.
That is, we view $A$ as a $\mb V_A$-algebra via interpreting the constant $c_a$ as $a$ for each $a\in A$.
\end{Definition}

Note that there is a one-to-one correspondence between arbitrary $\mb V_A$-algebras $(A',(c_a)_{a\in A})$
and $\mb V$-algebras $A'$ equipped with a $\mb V$-homomorphism $f:A\to A'$.
Indeed given $f:A\to A'$ we can add values of the constants $c_a$ to $A'$ through $c_a:=f(a)$;
conversely, for any $\mb V_A$-algebra $(A',(c_a)_{a\in A})$ we get a map $f:A\to A'$ given by $f(a):=c_a$.
Obviously under this correspondence, $f$ is a homomorphism iff the $\mb V_A$-identities are satisfied in $(A',(c_a)_{a\in A})$.
In particular, to the above algebra $(A,(a)_{a\in A})$ corresponds the identity homomorphism $A\to A$.

By definition, a $\mb V$-algebra $A'$ belongs to the subvariety $\mb V(A)$ of $\mb V$ generated by $A$
iff there are homomorphisms $A'\otno S\into A^I$ (that is, a set $I$, a subalgebra $S\subseteq A^I$ and an onto homomorphism $S\onto A'$).
In case of $\mb V_A$,

\begin{Proposition}\label{obvious}
If a $\mb V_A$-algebra $(A',(c_a)_{a\in A})$ is in $\mb V_A(A)$ then $A'$ is in $\mb V(A)$.
Conversely, for a $\mb V(A)$-algebra $A'$, a choice of constants $(c_a)_{a\in A}$, $c_a\in A'$,
turns $(A',(c_a)_{a\in A})$ into a $\mb V_A(A)$-algebra iff there is a subalgebra $S\subseteq A^I$
and an onto homomorphism $q:S\onto A'$ such that $S$ contains the diagonal subalgebra $A\subseteq A^I$
and $q(a)=c_a$ for each $a\in A$.
\end{Proposition}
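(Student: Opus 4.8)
The plan is to unwind the definitions; no serious difficulty is expected, everything reducing to the dictionary between $\mb V_A$-algebras and $\mb V$-algebras equipped with a homomorphism out of $A$ recorded just before the statement. The one fact I would isolate first is a concrete description of powers of the generator in $\mb V_A$: for any set $I$, the power $(A,(a)_{a\in A})^I$ formed in $\mb V_A$ has as underlying $\mb V$-algebra the ordinary power $A^I$, and in it each constant $c_a$ is interpreted as the diagonal tuple $\bar a\in A^I$ all of whose components equal $a$, since a nullary operation is computed componentwise in a product. Two consequences follow at once: a $\mb V_A$-subalgebra of $(A,(a)_{a\in A})^I$ is exactly a $\mb V$-subalgebra $S\subseteq A^I$ closed under the constants $c_a$, i.~e.\ one containing the diagonal subalgebra $A\subseteq A^I$; and a $\mb V_A$-homomorphism from such an $S$ to an algebra carrying constants $(c_a)_{a\in A}$ is exactly a $\mb V$-homomorphism $q$ with $q(\bar a)=c_a$ for all $a$. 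I would also recall that $\mb V_A(A)$, being the variety generated by $(A,(a)_{a\in A})$, contains every subalgebra of every such power.

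With this in hand, both the first assertion and the ``only if'' half of the second follow at once. If $(A',(c_a)_{a\in A})\in\mb V_A(A)$, then by definition there are a set $I$, a $\mb V_A$-subalgebra $S\subseteq(A,(a)_{a\in A})^I$, and an onto $\mb V_A$-homomorphism $q\colon S\onto(A',(c_a)_{a\in A})$. By the previous paragraph $S$ is a $\mb V$-subalgebra of $A^I$ containing the diagonal subalgebra $A$, and $q\colon S\onto A'$ is an onto $\mb V$-homomorphism with $q(a)=c_a$ for every $a\in A$ --- precisely the data claimed. Forgetting the constants, $A'\otno S\into A^I$ exhibits $A'$ as a member of $\mb V(A)$, which is the first assertion.

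Conversely, suppose given a set $I$, a subalgebra $S\subseteq A^I$ containing the diagonal subalgebra $A$, and an onto $\mb V$-homomorphism $q\colon S\onto A'$ with $q(a)=c_a$. I would endow $S$ with the constants $\bar a$; by the description above this turns $S$ into a $\mb V_A$-subalgebra of $(A,(a)_{a\in A})^I$, hence into an algebra lying in $\mb V_A(A)$, and since $q(\bar a)=c_a$ the same map $q$ becomes an onto $\mb V_A$-homomorphism $S\onto(A',(c_a)_{a\in A})$. An onto image of an algebra in $\mb V_A(A)$ is again in $\mb V_A(A)$, so $(A',(c_a)_{a\in A})\in\mb V_A(A)$; in particular it is a genuine $\mb V_A$-algebra, so the chosen constants are consistent with the identities defining $\mb V_A$. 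The only place calling for a little care is the bookkeeping in the first paragraph --- matching ``$S$ is closed under the nullary operations $c_a$'' with ``$S$ contains the diagonal'', and ``$q$ respects the $c_a$'' with ``$q$ carries diagonals to the prescribed constants''; once that dictionary is spelled out, both directions are automatic.
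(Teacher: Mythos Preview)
Your proof is correct and follows essentially the same route as the paper's: both unwind the dictionary between $\mb V_A$-algebras and $\mb V$-homomorphisms out of $A$, identify the $\mb V_A$-power $(A,(a)_{a\in A})^I$ with the diagonal $A\to A^I$, and then read off that a $\mb V_A$-subalgebra of this power is exactly a $\mb V$-subalgebra containing the diagonal, while a $\mb V_A$-homomorphism out of it is exactly a $\mb V$-homomorphism carrying diagonals to the prescribed constants. The paper compresses all of this into two sentences; you spell out the bookkeeping in more detail, but the argument is the same.
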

\begin{proof}
Immediately from the above correspondence between $\mb V_A$-algebras and homomorphisms,
$(A',(c_a)_{a\in A})$ is in $\mb V_A(A)$ iff there is a $\mb V_A$-subalgebra $S$ of the $\mb V_A$-algebra
$(A\to A)^I=(A\to A^I)$ and an onto $\mb V_A$-homomorphism $q:S\onto A'$.
Obviously the above just means that $S$ contains the diagonal, and $q$ maps the value of $a\in A$ under the diagonal to the constant $c_a\in A'$.
\end{proof}

\begin{Corollary}\label{general}
Let $f:A\to B$ and $g:B\to C$ be homomorphisms of algebras in some variety $\mb V$.
Then by the above, we have a $\mb V_A$-algebra $(B,(f(a))_{a\in A})$ and a $\mb V_B$-algebra $(C,(g(b))_{b\in B})$.

If $(B,(f(a))_{a\in A})$ is in $\mb V_A(A)$ and $(C,(g(b))_{b\in B})$ is in $\mb V_B(B)$ then also $(C,(g(f(a)))_{a\in A})$ is in $\mb V_A(A)$.
\end{Corollary}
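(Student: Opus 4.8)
The plan is to feed both hypotheses through Proposition \ref{obvious} and then splice the two presentations together over a product index set. From $(B,(f(a))_{a\in A})\in\mb V_A(A)$, Proposition \ref{obvious} supplies a set $I$, a subalgebra $S\subseteq A^I$ containing the diagonal copy of $A$, and an onto homomorphism $q:S\onto B$ sending the diagonal element of each $a\in A$ to $f(a)$. Likewise $(C,(g(b))_{b\in B})\in\mb V_B(B)$ supplies a set $J$, a subalgebra $T\subseteq B^J$ containing the diagonal copy of $B$, and an onto homomorphism $r:T\onto C$ sending the diagonal element of each $b\in B$ to $g(b)$. What we must produce, again by Proposition \ref{obvious}, is a set $K$, a subalgebra $U\subseteq A^K$ containing the diagonal copy of $A$, and an onto homomorphism $p:U\onto C$ sending the diagonal element of each $a\in A$ to $g(f(a))$.

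First I would take $K:=I\times J$, so that $A^K$ is identified with $(A^I)^J$, and form the coordinatewise homomorphism $q^J:S^J\to B^J$. Then I would set $U:=(q^J)\1(T)$, which is a subalgebra of $S^J\subseteq A^K$ because $T$ is a subalgebra of $B^J$, and put $p:=r\circ(q^J|_U):U\to C$. It then remains to check three things: (i) $p$ is onto --- it suffices that $q^J|_U:U\to T$ is onto, which follows by lifting an element of $T\subseteq B^J$ coordinatewise through the surjection $q$ and noting the lift lands in $U$, after which surjectivity of $r$ finishes it; (ii) $U$ contains the diagonal copy of $A$ --- the diagonal element of $a$ in $(A^I)^J$ is the constant family at the diagonal element of $a$ in $A^I$, which lies in $S$, and $q^J$ carries it to the diagonal element of $f(a)$ in $B^J$, which lies in $T$; (iii) on that diagonal element $p$ takes the value $r(\text{diagonal element of }f(a))=g(f(a))$, by the defining properties of $q$ and $r$. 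Having verified these, Proposition \ref{obvious} yields that $(C,(g(f(a)))_{a\in A})$ is a $\mb V_A(A)$-algebra.

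I do not anticipate any genuine difficulty; no feature of the variety $\mb V$ is used beyond what is already encoded in Proposition \ref{obvious}. The only place demanding a little care is the bookkeeping around the identification $A^{I\times J}\cong(A^I)^J$ and the tracking of the several ``diagonal'' elements through $q^J$ and through the pullback $(q^J)\1(T)$ --- in particular, confirming that the diagonal copy of $A$ in $A^K$ really does sit inside $U$, which is precisely the step where the assumptions that $S$ and $T$ contain their own diagonals are consumed.
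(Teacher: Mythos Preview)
Your proposal is correct and follows essentially the same route as the paper: both unpack the hypotheses via Proposition~\ref{obvious}, set $U=(q^J)^{-1}(T)\subseteq S^J\subseteq A^{I\times J}$, and take the composite $r\circ(q^J|_U)$ as the required surjection onto $C$. Your write-up is in fact a bit more explicit about the surjectivity of $q^J|_U$ and about tracking the diagonal of $A$ through $q^J$, but the construction and verifications coincide with the paper's.
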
 
\begin{proof}
By Proposition \ref{obvious} the hypothesis is equivalent to the data given $A\subseteq S\subseteq A^I$ and $q:S\onto B$ with $q|_A=f$
as well as $B\subseteq T\subseteq B^J$ and $r:T\onto C$ with $r|_B=g$.

Given that, let $U\subseteq S^J$ be the preimage of $T\subseteq B^J$ under $q^J:S^J\onto B^J$, so that $q^J|_U$ is an onto homomorphism $s:U\onto T$.
Then $U$ contains the diagonal $S\subseteq S^J$ and $A\subseteq A^J\subseteq S^J\subseteq A^{I\times J}$,
thus $U$ also contains the diagonal $A\subseteq A^{I\times J}$.

Moreover $q|_A=f$ and $r|_B=g$ implies $(r\circ s)|_{A\subseteq S\subseteq U}=g\circ f$.

The whole argument can be summarized in a diagram as follows:
\begin{center}
\begin{tikzpicture}[xscale=2,yscale=1.7,>=stealth,evaluate={\tr=.5;}]
\node (C) at (0,0) {$C$};
\node (T) at (0,1) {$T$};
\node (U) at (0,2) {$U$};
\node (BJ) at (1,1) {$B^J$};
\node (SJ) at (1,2) {$S^J$};
\node (AIJ) at (2,2) {$A^{I\times J}$};
\node (B) at (-\tr,1+\tr) {$B$};
\node (S) at (-\tr,2+\tr) {$S$};
\node (A) at (-2*\tr,2+2*\tr) {$A$};
\node (AJ) at (1-\tr,2+\tr) {$A^J$};
\draw[->] (A) -- node[left] {$f$} (B);
\draw[->] (B) -- node[left] {$g$} (C);
\draw[right hook->] (A) -- (AJ);
\draw[right hook->] (AJ) -- (AIJ);
\draw[right hook->] (A) -- (S);
\draw[right hook->] (S) -- (U);
\draw[right hook->] (U) -- (SJ);
\draw[right hook->] (SJ) -- (AIJ);
\draw[right hook->] (B) -- (T);
\draw[right hook->] (T) -- (BJ);
\draw[right hook->] (B) -- (BJ);
\draw[right hook->] (S) -- (SJ);
\draw[right hook->] (AJ) -- (SJ);
\draw[line width=3pt,white] (U) -- (T);
\draw[->>] (U) -- node[right] {$s$} (T);
\draw[->>] (T) -- node[right] {$r$} (C);
\draw[->>] (SJ) -- node[right] {$q^J$} (BJ);
\draw[->>] (S) -- node[right] {$q$} (B);
\end{tikzpicture}
\end{center}
\end{proof}

\section{The one-step enrichment}\label{main}

\begin{Theorem}\label{onestep}
For any Heyting algebra $H$ and any $a\in H$, there is a set $D$, a subalgebra $H{\brk\iota}\subseteq H^D$ containing the diagonal $H\subseteq H^D$, a Heyting algebra $H{\brk{\Delta(a)}}$ and a surjective homomorphism $\pi:H{\brk\iota}\onto H{\brk{\Delta(a)}}$ such that
\begin{itemize}
\item[(a)]\label{onesteprin} the filter $\sD_{\pi(a)}(H{\brk{\Delta(a)}})$ is principal;
\item[(b)]\label{onestepinc} the composite homomorphism $H\hookrightarrow H{\brk\iota}\onto H{\brk{\Delta(a)}}$ is one-to-one;
\item[(c)]\label{onestepres} for any $b\in H$ with $\sD_b(H)$ principal, the filter $\sD_{\pi(b)}(H{\brk{\Delta(a)}})$ is principal too.
\end{itemize}
\end{Theorem}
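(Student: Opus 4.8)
The plan is to build $H{\brk{\Delta(a)}}$ by adjoining to $H$ a single new element $\delta$ freely in the variety $\mb V(H)$, subject only to the requirement that $\delta$ become the least dense element over $a$. Concretely, set $D:=H$ and let $H{\brk\iota}\subseteq H^D$ be the subalgebra generated by the diagonal copy of $H$ together with the element $\delta\in H^H$ given coordinatewise by $\delta(h):=h\lor(h\impl a)$. (One could instead take $\delta:=\mathrm{id}_H$; the ``densified'' choice has the mild advantage that $\delta$ is then automatically dense over the diagonal element $a$, each $h\lor(h\impl a)$ lying in $\sD_a(H)$ by Proposition \ref{elem}.) By Proposition \ref{obvious}, $H{\brk\iota}$ is a $\mb V_H(H)$-algebra. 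Now let $\theta$ be the smallest congruence on $H{\brk\iota}$ for which the image of $\delta$ is the least dense element over the image of $a$ in $H{\brk\iota}/\theta$; by Proposition \ref{nasty}, $\theta$ is generated by the pairs $\bigl(\delta\impl x,\ (a\impl x)\land((x\impl a)\impl a)\bigr)$, $x\in H{\brk\iota}$, and one checks this coincides with the congruence generated by the pairs $\bigl(\delta\land(x\lor(x\impl a)),\ \delta\bigr)$, $x\in H{\brk\iota}$. Set $H{\brk{\Delta(a)}}:=H{\brk\iota}/\theta$ with $\pi$ the quotient homomorphism.

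With this set-up, (a) is essentially built in: $\pi\delta$ is dense over $\pi(a)$ (homomorphisms preserve density), and it lies below $\pi(x\lor(x\impl a))$ for every $x\in H{\brk\iota}$ by the choice of $\theta$; since, by Proposition \ref{elem} and surjectivity of $\pi$, every element of $\sD_{\pi(a)}(H{\brk{\Delta(a)}})$ is of this form, $\pi\delta$ is its least element. Property (c) is equally quick and needs nothing about $\theta$: if $b\in H$ and $\beta:=\Delta_H(b)$, then for every $x\in H{\brk\iota}\subseteq H^D$ and every coordinate $k$ one has $x(k)\in H$, so $\beta\le x(k)\lor(x(k)\impl b)$ since $x(k)\lor(x(k)\impl b)\in\sD_b(H)$ and $\beta$ is its least element; therefore $\beta\le x\lor(x\impl b)$ already in $H{\brk\iota}$, so $\pi\beta$ lies below every element of $\sD_{\pi(b)}(H{\brk{\Delta(a)}})$ and, being dense over $\pi(b)$, is its least element.

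All the real work is in (b). I would first make $\theta$ explicit through the Heyting filter $F\subseteq H{\brk\iota}$ with $x\mathrel\theta y$ iff $x\bimp y\in F$: using that $\sD_{a}(H{\brk\iota})$ is a filter and that $(\delta\land d)\bimp\delta=\delta\impl d$, one finds that $F$ is exactly the upward closure of $\setof{\delta\impl(x\lor(x\impl a))}{x\in H{\brk\iota}}$. Identifying $H$ with its diagonal image, the composite $H\hookrightarrow H{\brk\iota}\onto H{\brk{\Delta(a)}}$ is one-to-one precisely when no element of $H$ other than $1$ belongs to $F$, because the images of $b,c\in H$ coincide exactly when $b\bimp c\in F$. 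Since every element of $H{\brk\iota}$ is the value at $\delta$ of a unary polynomial over $H$, and since the densification $d\colon H\to\sD_a(H)$, $d(x):=x\lor(x\impl a)$, restricts to the identity on $\sD_a(H)$, evaluating coordinatewise reduces this to the following purely Heyting-algebraic assertion: \emph{for every unary polynomial $p$ over $H$, the only $e\in H$ satisfying $u\impl d(p(u))\le e$ for all $u\in\sD_a(H)$ is $e=1$}. I expect this to be the crux of the whole theorem; it would suffice to exhibit, for each such $p$, a $u\in\sD_a(H)$ with $u\le d(p(u))$ (whence $u\impl d(p(u))=1$), and it is exactly here that the concrete, explicit description of $H{\brk{\Delta(a)}}$---rather than merely its universal property---must be used, for instance by analysing the iterates of $d\circ p$ on $\sD_a(H)$ starting from $1$.
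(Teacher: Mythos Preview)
Your construction is essentially the paper's: the paper takes $D=\sD_a(H)$ with extra generator the identity inclusion $\iota:\sD_a(H)\hookrightarrow H$, whereas you take $D=H$ with $\delta(h)=h\lor(h\impl a)$; since $\delta$ restricts to the identity on $\sD_a(H)$, the coordinate restriction $H^H\to H^{\sD_a(H)}$ carries your generator to the paper's and your filter $F$ to the paper's $\sF_a$. Your treatment of (a) matches. Your argument for (c) is in fact shorter than the paper's (which routes through Proposition~\ref{nasty}): the coordinatewise inequality $\beta\le x(k)\lor(x(k)\impl b)$ in $H^D$ is all that is needed, and it is nice that you notice this.

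The gap is in (b). You have correctly reduced the question to: for every unary polynomial $p$ over $H$, exhibit $u\in\sD_a(H)$ with $u\le d(p(u))$, and you correctly point at the iterates of $q:=d\circ p$ starting from $1$. But you stop there, and the phrase ``analysing the iterates'' suggests you anticipate more work than is actually required. A \emph{single} iterate already does the job: set $u:=q(1)\in\sD_a(H)$. Since $u\land 1=u=u\land u$ and $q$ is itself a unary polynomial over $H$, Proposition~\ref{maintool} gives $u\land q(1)=u\land q(u)$, i.e.\ $q(1)\le q(q(1))$; hence $u\le q(u)$ and $u\impl q(u)=1\le e$. This one line is precisely the paper's Corollary~\ref{findone} (proved there via Proposition~\ref{partool}, which is just Proposition~\ref{maintool} specialised to $H{\brk\iota}$), and once you supply it your proof of (b) is complete.
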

\begin{proof}
We take $D=\sD_a(H)$. Thus let $H^{\sD_a(H)}$ be the product of $\sD_a(H)$ many copies of $H$. It will be convenient to view it as the algebra of all maps $\sD_a(H)\to H$ with pointwise operations.

We will identify $H$ with the diagonal subalgebra in $H^{\sD_a(H)}$. Or else, the subalgebra of all constant maps.

Note that $\sD_a(H^{\sD_a(H)})=\sD_a(H)^{\sD_a(H)}$. Indeed $\delta\in\sD_a(H^{\sD_a(H)})$ by definition means that $a\le\delta(h)$ and $\delta(h)\impl a=a$ for any $h\in H$, which precisely means that $\delta(\sD_a(H))\subseteq\sD_a(H)$.

Let $H{\brk\iota}\subseteq H^{\sD_a(H)}$ be the subalgebra generated by $H\subseteq H^{\sD_a(H)}$ and one more element, namely the identical embedding
\[
\iota:\sD_a(H)\hookrightarrow H,
\]
$\iota(d)\equiv d$. Note that $\iota\in\sD_a(H{\brk\iota})$ ($=\sD_a(H)^{\sD_a(H)}\cap H{\brk\iota}$) since $a\le d=\iota(d)$ and $\iota(d)\impl a=d\impl a=a$ for all $d\in\sD_a(H)$.

Let $\sF_a$ be the filter of $H{\brk\iota}$ generated by the subset $\setof{\iota\impl\delta}{\delta\in\sD_a(H{\brk\iota})}$ of $H{\brk\iota}$. Note that the generating subset is closed under finite meets, so that
\[
\sF_a=\setof{\gamma\in H{\brk\iota}}{\exists\delta\in\sD_a(H{\brk\iota})\ \iota\impl\delta\le\gamma}.
\]

Let $H{\brk{\Delta(a)}}=H{\brk\iota}/\sF_a$. We will denote the class of an $\eta\in H{\brk\iota}$ in $H{\brk{\Delta(a)}}$ with $[\eta]$.

Clearly if $\delta\in\sD_a(H{\brk\iota})$ then $[\delta]\in\sD_{[a]}(H{\brk{\Delta(a)}})$. But also conversely, $[\eta]\in\sD_{[a]}(H{\brk{\Delta(a)}})$ implies $\eta\in\sD_a(H{\brk\iota})$ for any $\eta\in H{\brk\iota}$.

Indeed by definition $[\eta]\in\sD_{[a]}(H{\brk{\Delta(a)}})$ means that
there is some $\delta\in\sD_a(H{\brk\iota})$ with $\iota\impl\delta\le(a\impl\eta)\land((\eta\impl a)\impl a)$.
Thus $d\impl\delta(d)\le a\impl\eta(d)$ and $d\impl\delta(d)\le(\eta(d)\impl a)\impl a$ for all $d\in\sD_a(H)$.
Then $a=a\land(d\impl\delta(d))\le\eta(d)$, and
\[
\eta(d)\impl a = ((\eta(d)\impl a)\impl a)\impl a\le(d\impl\delta(d))\impl a = a,
\]
so $\eta\in\sD_a(H{\brk\iota})$.

It follows that for all $x$ in $H{\brk{\Delta(a)}}$, $x\in\sD_{[a]}(H{\brk{\Delta(a)}})$ iff $[\iota]\le x$.

Hereby \ref{onesteprin}(a) is proved.

To prove \ref{onestepinc}(b) we will need the following

\begin{Proposition}\label{partool}
For any $\eta\in H{\brk\iota}$, any $x,y\in\sD_a(H)$ and any $h\in H$,
\[
h\land x=h\land y \then h\land\eta(x)=h\land\eta(y).
\]
\end{Proposition}
\begin{proof}
By construction of $H{\brk\iota}$, each $\eta\in H{\brk\iota}$ has form $\ph(\iota,h_1,...,h_n)$ for some \IPC-formula $\ph$. That is, one can find a formula $\ph(p_0,p_1,...,p_n)$  and $h_1$, ..., $h_n\in H$ such that $\eta(d)=\ph(d,h_1,...,h_n)$ for all $d\in\sD_a(H)$. We thus can use Proposition \ref{maintool} to reach the desired conclusion.
\end{proof}

\begin{Corollary}\label{findone}
For any $\delta\in\sD_a(H{\brk\iota})$, 
\[
\delta(1)\le\delta(\delta(1)).
\]
\end{Corollary}
\begin{proof}
Taking in Proposition \ref{partool} $h=x=d$ and $y=1$ we obtain
\[
d\land\eta(d)=d\land\eta(1)
\]
for any $d\in\sD_a(H)$ and any $\eta\in H{\brk\iota}$. If $\eta=\delta\in\sD_a(H{\brk\iota})$ then $\delta(1)\in\sD_a(H)$, hence we can take $d=\delta(1)$ and obtain
\[
\delta(1)\land\delta\delta(1)=\delta(1)\land\delta(1)=\delta(1),
\]
so indeed $\delta(1)\le\delta\delta(1)$.
\end{proof}

Returning to \ref{onestepinc}(b), what we have to show is that the composite $H\hookrightarrow H{\brk\iota}\twoheadrightarrow H{\brk\iota}/\sF_a=H{\brk{\Delta(a)}}$ remains one-to-one. By definition this means that $\iota\impl\delta\le h$ implies $h=1$, for any $\delta\in\sD_a(H{\brk\iota})$ and any $h\in H$.

Indeed $\iota\impl\delta\le h$ means that $d\impl\delta(d)\le h$ for all $d\in\sD_a(H)$. Taking here $d=\delta(1)$, by Corollary \ref{findone} we obtain $1=\delta(1)\impl\delta(\delta(1))\le h$, i.~e. $h=1$.

Finally to prove \ref{onestepres}(c), suppose given $b\in H$ such that $\sD_b(H)$ is principal,
i.~e. there is a $c\in H$ such that $b\le c$, $c\impl b=b$, and $c\le x$ for any $x\in H$ with $b\le x$ and $x\impl b=b$.
Then clearly $[b]\le[c]$ and $[c]\impl[b]=[b]$ in $H{\brk{\Delta(a)}}$, so we have to prove that $[c]\le x$ for any $x\in\sD_{[b]}(H{\brk{\Delta(a)}})$,
i.~e. for any $x\in H{\brk{\Delta(a)}}$ with $[b]\le x$ and $x\impl[b]=[b]$. Given such $x$, take some $\eta\in H{\brk\iota}$ with $x=[\eta]$.
Then, $x\in\sD_{[b]}(H{\brk{\Delta(a)}})$ iff there is a $\delta\in\sD_a(H{\brk\iota})$ with $\iota\impl\delta\le (b\impl\eta)\land((\eta\impl b)\impl b)$, i.~e.
\[
d\impl\delta(d)\le(b\impl\eta(d))\land((\eta(d)\impl b)\impl b)
\]
for all $d\in\sD_a(H)$.

By Proposition \ref{nasty} the hypotheses imply $(b\impl h)\land((h\impl b)\impl b)=c\impl h$ for any $h\in H$.
Taking here $h=\eta(d)$ we conclude that $d\impl\delta(d)\le c\impl\eta(d)$ for all $d\in\sD_a(H)$.
In other words, $\iota\impl\delta\le c\impl\eta$, which means that $[c]\le[\eta]=x$ in $H{\brk{\Delta(a)}}$.
\end{proof}

\begin{Remark}\label{stronger}
Note that, since $H{\brk\iota}\subseteq H^{\sD_a(H)}$ contains the diagonal, by Proposition \ref{obvious} we have actually proved that
$H\hookrightarrow H{\brk{\Delta(a)}}$ belongs to the variety $\mb V_H(H)$.
\end{Remark}

\begin{Remark}
There is an alternative argument for \ref{onestepres}(b): using Proposition \ref{partool} one can easily prove that
\[
\iota\impl\eta=\iota\impl\eta(1)
\]
for any $\eta\in H{\brk\iota}$. It then also follows that the filter $\sF_a$ is already generated by only the elements of the form $\iota\impl d$ for $d\in\sD_a(H)$.

Clearly then $h\in\sF_a$ iff $\iota\impl d\le h$ for some $d\in\sD_a(H)$, and then $\iota(d)\impl d=d\impl d=1\le h$.
\end{Remark}

\begin{Remark}
Let us note that shifting from $H^D$ to $H{\brk\iota}$ is essential. For example, consider
\[
H=\set{0<\cdots<\frac1n<\cdots<\frac13<\frac12<\frac11=1}.
\]
In this algebra $d\in\sD_0(H)$ iff $d>0$.

For some $n_0>0$, let $\delta_0:\sD_0(H)\to H$ be the map given by
\[
\delta_0\left(\frac1n\right)=\frac1{n+n_0}.
\]
Then $\iota\impl\delta_0=\delta_0\le\frac1{n_0}$, so if instead of $H{\brk\iota}$ we would take any algebra $H'$ with $H{\brk\iota}\subseteq H'\subseteq H^{\sD_0(H)}$ such that $\delta_0\in H'$, then for the analog of $\sF_0$, i.~e. for the filter $\sF'$ of $H'$ generated by $\setof{\iota\impl\delta}{\delta\in\sD_0(H')}$ we would get that $\frac1{n_0}\in\sF'$, so that the composite $H\into H'\onto H'/\sF'$ would not be an embedding.
\end{Remark}

\section{The embedding}\label{final}

We are going to perform the described construction for all elements of a given Heyting algebra.
To show this can be done in a consistent way, we will characterize this construction by a universal property.

\begin{Proposition}\label{extend}
Let $f:H\to H'$ be a homomorphism of Heyting algebras such that the $\mb V_H$-algebra $(H',(f(h))_{h\in H})$ belongs to the variety $\mb V_H(H)$ (notation from Definition \ref{vaa}).
For $a\in H$, suppose that $H'$ possesses $\Delta(f(a))$, i.~e. the filter $\sD_{f(a)}(H')$ is principal, with smallest element $\Delta(f(a))$.
Then, there is a unique extension of $f$ to a homomorphism $\tilde f:H{\brk{\Delta(a)}}\to H'$ along the inclusion $H\into H{\brk{\Delta(a)}}$ satisfying $\tilde f(\Delta(a))=\Delta(f(a))$.
\end{Proposition}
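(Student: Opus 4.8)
The plan is to construct $\tilde f$ by first extending $f$ to a homomorphism $g\colon H{\brk\iota}\to H'$ that sends $\iota$ to $\Delta(f(a))$, and then showing that $g$ annihilates the filter $\sF_a$, so that $g$ descends to the quotient $H{\brk{\Delta(a)}}=H{\brk\iota}/\sF_a$. Uniqueness comes for free and I would dispose of it first: $H{\brk\iota}$ is generated by $H$ together with $\iota$, so $H{\brk{\Delta(a)}}$ is generated by the image of $H$ together with $[\iota]$, and recall from the proof of Theorem \ref{onestep} that $[\iota]$ is the smallest element of $\sD_{[a]}(H{\brk{\Delta(a)}})$, i.e. is the element denoted $\Delta(a)$; hence any homomorphism restricting to $f$ on $H$ and sending $\Delta(a)$ to $\Delta(f(a))$ is determined on generators.

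For the construction of $g$ I would invoke Proposition \ref{obvious}: the hypothesis $(H',(f(h))_{h\in H})\in\mb V_H(H)$ supplies a set $I$, a subalgebra $H\subseteq S\subseteq H^I$ containing the diagonal, and a surjection $q\colon S\onto H'$ with $q(h)=f(h)$ for every diagonal $h\in H$. The first step is to lift $\Delta(f(a))$ to a dense-over-$a$ element of $S$: choose $\sigma_0\in S$ with $q(\sigma_0)=\Delta(f(a))$ and replace it by $\sigma:=\sigma_0\lor(\sigma_0\impl a)$, where $a$ is the diagonal element. By Proposition \ref{elem}, $\sigma\in\sD_a(S)$, and since $f(a)\le\Delta(f(a))$ and $\Delta(f(a))\impl f(a)=f(a)$, one computes $q(\sigma)=\Delta(f(a))\lor f(a)=\Delta(f(a))$. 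Just as in the proof of Theorem \ref{onestep}, $\sD_a(S)$ consists precisely of those elements of $S$ all of whose coordinates lie in $\sD_a(H)$, so $\sigma$ is a map $I\to\sD_a(H)$. Now precomposition $\eta\mapsto\eta\circ\sigma$ is a Heyting homomorphism $H^{\sD_a(H)}\to H^I$ sending the constant map with value $h$ to the constant map with value $h$ and sending $\iota$ (the identity on $\sD_a(H)$) to $\sigma$; it therefore carries $H{\brk\iota}$ into the subalgebra of $H^I$ generated by the diagonal and $\sigma$, which is contained in $S$. Composing with $q$ yields $g\colon H{\brk\iota}\to H'$ with $g|_H=f$ and $g(\iota)=q(\sigma)=\Delta(f(a))$.

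It remains to check that $g$ kills $\sF_a$. Since $\sF_a$ is generated by the elements $\iota\impl\delta$ with $\delta\in\sD_a(H{\brk\iota})$, it is enough to verify $g(\iota)\le g(\delta)$ for each such $\delta$. Here $\delta\circ\sigma$ is the image of $\delta$ under the precomposition homomorphism, hence lies in $S$; and all its coordinates lie in $\sD_a(H)$ because $\delta\in\sD_a(H{\brk\iota})$ while every $\sigma(i)\in\sD_a(H)$; therefore $\delta\circ\sigma\in\sD_a(S)$, and applying $q$ (which preserves $\le$ and $\impl$ and sends the diagonal $a$ to $f(a)$) gives $g(\delta)=q(\delta\circ\sigma)\in\sD_{f(a)}(H')$. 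By minimality of $\Delta(f(a))$ we get $g(\iota)=\Delta(f(a))\le g(\delta)$, as wanted. Consequently $g$ factors through the quotient map $H{\brk\iota}\onto H{\brk{\Delta(a)}}$ via a homomorphism $\tilde f\colon H{\brk{\Delta(a)}}\to H'$, and $\tilde f$ restricts to $f$ on $H$ and satisfies $\tilde f(\Delta(a))=\tilde f([\iota])=g(\iota)=\Delta(f(a))$.

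The step I expect to be the crux is the very existence of $g$ --- equivalently, that none of the relations $\iota$ satisfies over $H$ in $H{\brk\iota}$ gets destroyed when $\iota$ is replaced by $\Delta(f(a))$. The mechanism above handles this automatically: those relations are exactly the ``for all $d\in\sD_a(H)$'' relations among parameters from $H$, and pulling such a relation back along a $\sigma$ all of whose coordinates lie in $\sD_a(H)$ and then pushing it forward by $q$ transports it verbatim to $H'$; the only delicate bookkeeping point is ensuring, via the substitution $\sigma_0\mapsto\sigma_0\lor(\sigma_0\impl a)$, that the chosen lift of $\Delta(f(a))$ already sits inside $\sD_a(S)$.
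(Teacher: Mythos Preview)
Your argument is correct, and it takes a genuinely different route from the paper's. The paper introduces an auxiliary algebra $H{\brk i}\subseteq H^H$ (generated by the diagonal together with the identity map $i:H\to H$), identifies it as the free $\mb V_H(H)$-algebra on one generator, and obtains the extension $\tilde f$ abstractly from that universal property; it then has to argue, via an identity $(h\impl a)\impl h\le\ph(h,c_{h_1},\dots,c_{h_n})$ of $\mb V_H(H)$, that the map factors through the restriction $H{\brk i}\onto H{\brk\iota}$ before finally passing to the quotient by $\sF_a$. Your approach bypasses the free object entirely: you use Proposition~\ref{obvious} to realise $H'$ concretely as a quotient $q:S\onto H'$ with $H\subseteq S\subseteq H^I$, lift $\Delta(f(a))$ to an element $\sigma\in\sD_a(S)$ (the substitution $\sigma_0\mapsto\sigma_0\lor(\sigma_0\impl a)$ being exactly the right move here), and then observe that precomposition with $\sigma:I\to\sD_a(H)$ is already a Heyting homomorphism $H^{\sD_a(H)}\to H^I$ carrying $H{\brk\iota}$ into $S$. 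This is more concrete and arguably more transparent: the ``relations $\iota$ satisfies over $H$'' are transported automatically by functoriality of precomposition, with no need to name the relevant identities explicitly. The paper's approach, on the other hand, isolates the conceptual content that $H{\brk i}$ is a free object and that $H{\brk\iota}$ is its quotient by a single $\mb V_H(H)$-relation, which may be of independent interest.
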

\begin{proof}
Uniqueness is clear as $H{\brk{\Delta(a)}}$ is generated by $H$ and $\Delta(a)$.

For existence, let us additionally consider the subalgebra $H{\brk i}$ of $H^H$
generated by the diagonal $H\subseteq H^H$ together with the identity map $i:H\to H$.

We claim that $H{\brk i}$ has the following universal property:
for any homomorphism $f:H\to H'$ representing an algebra in $\mb V_H(H)$ and any $h'\in H'$,
there is a unique extension $\tilde f:H{\brk i}\to H'$ of $f$ along $H\subseteq H{\brk i}$ with $\tilde f(i)=h'$.

In other words, $H{\brk i}$ is a free $\mb V_H(H)$-algebra on one generator $i$.

Indeed by definition of $H{\brk i}$ each of its elements has form $\ph(i,h_1,...,h_n)$ for some \IPC-formula $\ph(p_0,p_1,...,p_n)$ and some $h_1$, ..., $h_n$.
That is, viewed as a map $H\to H$, it sends $h\in H$ to $\ph(h,h_1,...,h_n)$.

Thus uniqueness is clear: we are forced to put
\[
\tilde f(\ph(i,h_1,...,h_n))=\ph(h',f(h_1),...,f(h_n)).
\]

For existence, we have to show that for any $\ph(p_0,p_1,...,p_n)$, whenever $\ph(i,h_1,...,h_n)=1$ in $H{\brk i}$ then also $\ph(h',f(h_1),...,f(h_n))=1$ in $H'$.

Indeed $\ph(i,h_1,...,h_n)=1$ means that $\ph(h,h_1,...,h_n)=1$ for all $h\in H$, i.~e. $\ph(x,c_{h_1},...,c_{h_n})$ is an identity in the variety $\mb V_H(H)$,
consequently $\ph(h',f(h_1),...,f(h_n))=1$ in $H'$ as $f:H\to H'$ is in $\mb V_H(H)$.
 
Next observe that restriction along the embedding $\sD_a(H)\subseteq H$ induces a surjective homomorphism $\rho:H^H\onto H^{\sD_a(H)}$ such that $\rho(H{\brk i})=H{\brk\iota}$.
Moreover, $\rho(i)=\iota$, and the above extension $\tilde f:H{\brk i}\to H'$ factors through $\rho$ iff $h'\in\sD_{f(a)}(H')$. Indeed if such a factorization exists then $h'$ is the image of $\iota$ under this factorization, and $\iota\in\sD_a(H{\brk\iota})$, so $h'\in\sD_{f(a)}(H')$. For the converse, combining Proposition \ref{maintool} with Corollary \ref{eqD} shows that, given an \IPC-formula $\ph(p_0,p_1,...,p_n)$ and some $h_1,...,h_n\in H$, the equality $\ph(d,h_1,...,h_n)=1$ holds for any $d\in\sD_a(H)$ if and only if
\[
(h\impl a)\impl h\le\ph(h,h_1,...,h_n)
\]
for any $h\in H$. This then means that $(x\impl a)\impl x\le\ph(x,c_{h_1},...,c_{h_n})$ is an identity of $\mb V_H(H)$. So since $f:H\to H'$ is in $\mb V_H(H)$, for any $h'\in H'$ one has $(h'\impl f(a))\impl h'\le\ph(h',f(h_1),...,f(h_n))$. In particular, if $h'\in\sD_{f(a)}(H')$ then $(h'\impl f(a))\impl h'=1$, consequently $\ph(h',f(h_1),...,f(h_n))=1$.

Moreover if $h'=\Delta(f(a))$ then $\tilde f(\iota\impl\delta)=h'\impl\tilde f(\delta)=1$ for any $\delta\in\sD_a(H{\brk\iota})$ since $\tilde f(\sD_a(H{\brk\iota}))\subseteq\sD_{f(a)}(H')$,
hence $\tilde f$ factors further through $H{\brk\iota}\onto H{\brk{\Delta(a)}}$.
\end{proof}

\begin{Corollary}\label{iso}
For any $a,b\in H$, there is a unique isomorphism between the algebras $H{\brk{\Delta_H(a)}}{\brk{\Delta_{H{\brk{\Delta_H(a)}}}(b)}}$ and $H{\brk{\Delta_H(b)}}{\brk{\Delta_{H{\brk{\Delta_H(b)}}}(a)}}$ which identifies $\Delta_{H{\brk{\Delta_H(b)}}}(a)$ with $\Delta_H(a)$, $\Delta_{H{\brk{\Delta_H(b)}}}(b)$ with $\Delta_H(b)$, and fixes $H$.
\end{Corollary}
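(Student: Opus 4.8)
The plan is to deduce the statement from the universal property of the one‑step enrichment (Proposition~\ref{extend}), applied twice, together with its uniqueness clause. Write $L$ for the algebra $H{\brk{\Delta_H(a)}}{\brk{\Delta_{H{\brk{\Delta_H(a)}}}(b)}}$ and $L'$ for $H{\brk{\Delta_H(b)}}{\brk{\Delta_{H{\brk{\Delta_H(b)}}}(a)}}$. First I would record that each of $L$ and $L'$ receives from $H$ a homomorphism representing an algebra in $\mb V_H(H)$, and that in each of them both $a$ and $b$ acquire a $\Delta$. For $L$: by Remark~\ref{stronger}, $H\into H{\brk{\Delta_H(a)}}$ is in $\mb V_H(H)$ and $H{\brk{\Delta_H(a)}}\into L$ is in $\mb V_{H{\brk{\Delta_H(a)}}}(H{\brk{\Delta_H(a)}})$, so $H\into L$ is in $\mb V_H(H)$ by Corollary~\ref{general}; moreover $\Delta(b)$ exists in $L$ by Theorem~\ref{onestep}(a), and $\Delta(a)$ still exists in $L$ by Theorem~\ref{onestep}(c). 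The case of $L'$ is symmetric.

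To construct $L\to L'$: since $H\to L'$ is in $\mb V_H(H)$ and $L'$ possesses $\Delta$ of the image of $a$, Proposition~\ref{extend} gives a unique homomorphism $g\colon H{\brk{\Delta_H(a)}}\to L'$ extending $H\into L'$ with $g(\Delta_H(a))=\Delta_{L'}(a)$. I would then extend $g$ once more along $H{\brk{\Delta_H(a)}}\into L$ to a homomorphism $L\to L'$ sending $\Delta(b)$ to $\Delta_{L'}(b)$; this is exactly what Proposition~\ref{extend} delivers, now applied over the base $H{\brk{\Delta_H(a)}}$ and for the element $b$, provided the $\mb V_{H{\brk{\Delta_H(a)}}}$-algebra $(L',(g(k))_{k})$ lies in $\mb V_{H{\brk{\Delta_H(a)}}}(H{\brk{\Delta_H(a)}})$ — the other hypothesis, that $\Delta(g(b))=\Delta_{L'}(b)$ exists, being already in hand from the first paragraph.

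Checking this membership is the heart of the matter, and the step I expect to be the main obstacle. What is needed in general is: if $f\colon H\to H'$ represents an algebra in $\mb V_H(H)$ and $H'$ possesses $\Delta(f(a))$, then the extension $\tilde f\colon H{\brk{\Delta(a)}}\to H'$ of Proposition~\ref{extend} again represents an algebra in $\mb V_{H{\brk{\Delta(a)}}}(H{\brk{\Delta(a)}})$ (one applies this with $H'=L'$, $f$ the inclusion and $\tilde f=g$). I would prove it from the explicit description in Theorem~\ref{onestep}: pick a subalgebra $H\subseteq S_0\subseteq H^I$ containing the diagonal and a surjection $q_0\colon S_0\onto H'$ with $q_0|_H=f$; lift $\Delta_{H'}(f(a))$ to some $\sigma\in\sD_a(S_0)$ (take any preimage $\sigma_0$ and replace it by $\sigma_0\lor(\sigma_0\impl a)$, which is dense over $a$ by Proposition~\ref{elem} and has the same image under $q_0$); then use $\sigma$, together with the fact that $H{\brk i}$ is free over $H$ in $\mb V_H$ on one generator (proof of Proposition~\ref{extend}) and the ensuing criterion for an element of $H{\brk{\Delta(a)}}$ to equal $1$, to transport the presentation $(S_0,q_0)$ through $\pi\colon H{\brk\iota}\onto H{\brk{\Delta(a)}}$ into a subalgebra of $H{\brk{\Delta(a)}}^I$ that contains the diagonal and surjects onto $H'$ compatibly with $\tilde f$. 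The delicate point is that the transported data respects all relations; I expect this to reduce to the description of $\sD_{[a]}(H{\brk{\Delta(a)}})$ obtained inside the proof of Theorem~\ref{onestep} (that $[\iota]$ is its least element) together with the equivalence, noted in the proof of Proposition~\ref{extend}, between ``$\ph(d,\bar h)=1$ for all $d\in\sD_a(H)$'' and ``$(h\impl a)\impl h\le\ph(h,\bar h)$ for all $h\in H$''.

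Granting this, Proposition~\ref{extend} yields $L\to L'$ with the prescribed effect on $\Delta(a)$ and $\Delta(b)$ and fixing $H$, and the symmetric construction yields $L'\to L$ likewise. Since $L$ is generated by $H$ together with $\Delta_H(a)$ and $\Delta(b)$ (and similarly for $L'$), and the composites $L\to L'\to L$ and $L'\to L\to L'$ restrict to the identity on $H$ and send each of the two $\Delta$'s to itself, each composite is the identity. Hence the two maps are mutually inverse isomorphisms identifying the $\Delta$'s as claimed, and uniqueness of the isomorphism follows because any isomorphism with the stated properties is determined on these same generators.
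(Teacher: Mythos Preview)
Your architecture is exactly the paper's: apply Proposition~\ref{extend} once over $H$ to get $g=\tilde f:H\brk{\Delta_H(a)}\to L'$, then again over $H\brk{\Delta_H(a)}$ to extend to $\dbtilde f:L\to L'$, build the symmetric map $L'\to L$, and conclude by uniqueness on the generators $H\cup\{\Delta(a),\Delta(b)\}$ that the composites are identities. On this skeleton you and the paper agree verbatim.

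You are, however, more scrupulous than the paper at the second step, and rightly so. Applying Proposition~\ref{extend} over the base $H\brk{\Delta_H(a)}$ for the element $b$ requires that the $\mb V_{H\brk{\Delta_H(a)}}$-algebra $(L',(g(k))_k)$ lie in $\mb V_{H\brk{\Delta_H(a)}}(H\brk{\Delta_H(a)})$. The paper writes only ``again by the same Proposition, using \ref{onestepres}(c)''; item~(c) supplies the \emph{other} hypothesis (existence of $\Delta(g(b))$ in $L'$) but says nothing about the variety membership. So the verification you single out as ``the heart of the matter'' is a point the paper leaves implicit.

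Your sketch for that verification has the right shape but is not yet a proof, and one of the moves you gesture at does not go through as stated. Evaluation $H\brk\iota\to H$ at a fixed $d\in\sD_a(H)$ does \emph{not} in general descend through $\pi$ to a homomorphism $H\brk{\Delta(a)}\to H$: by the identity $d\land\delta(d)=d\land\delta(1)$ established in the proof of Corollary~\ref{findone}, descent would force $d\le\delta(1)$ for every $\delta\in\sD_a(H\brk\iota)$, i.e.\ $d$ would have to be a lower bound of $\sD_a(H)$. So ``transporting $(S_0,q_0)$ through $\pi$'' coordinatewise is blocked. The viable formulation is rather to regard $S_0\subseteq H^I\subseteq H\brk{\Delta(a)}^I$, enlarge by the diagonal constant $\Delta(a)$ to a subalgebra $S$ containing the diagonal, and show that $\ph(\Delta(a),\bar s)\mapsto\ph(\Delta_{H'}(f(a)),q_0(\bar s))$ is well-defined on $S$. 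Here the dense lift $\sigma$ and the description of $\sF_a$ via the generators $\iota\impl d$ are indeed the relevant tools, but the witnessing $d$'s one extracts depend on the coordinate $i\in I$ and need not assemble into an element of $S_0$, so a further argument is still needed. In short: your plan matches the paper's, your extra care is justified, and the step you flag as the main obstacle genuinely is one --- for you and for the paper alike.
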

\begin{proof}
By Remark \ref{stronger} and Corollary \ref{general}, the composite homomorphism
\[
f:H\into H{\brk{\Delta_H(b)}}\into H{\brk{\Delta_H(b)}}{\brk{\Delta_{H{\brk{\Delta_H(b)}}}(a)}}
\]
is in $\mb V_H(H)$, so by Proposition \ref{extend} there is a unique extension
\[
\tilde f:H{\brk{\Delta_H(a)}}\to H{\brk{\Delta_H(b)}}{\brk{\Delta_{H{\brk{\Delta_H(b)}}}(a)}}
\]
of $f$ along $H\into H{\brk{\Delta_H(a)}}$ sending $\Delta_H(a)$ to $\Delta_{H{\brk{\Delta_H(b)}}}(a)$. Then again by the same Proposition, using \ref{onestepres}(c) we obtain that this $\tilde f$ further uniquely extends along the embedding $H{\brk{\Delta_H(a)}}\into H{\brk{\Delta_H(a)}}{\brk{\Delta_{H{\brk{\Delta_H(a)}}}(b)}}$ to a
\[
\dbtilde f:H{\brk{\Delta_H(a)}}{\brk{\Delta_{H{\brk{\Delta_H(a)}}}(b)}}\to H{\brk{\Delta_H(b)}}{\brk{\Delta_{H{\brk{\Delta_H(b)}}}(a)}}
\]
sending $\Delta_{H{\brk{\Delta_H(a)}}}(b)$ to $\Delta_H(b)$.

Interchanging $a$ and $b$, we similarly obtain a unique homomorphism
\[
\dbtilde g:H{\brk{\Delta_H(b)}}{\brk{\Delta_{H{\brk{\Delta_H(b)}}}(a)}}\to H{\brk{\Delta_H(a)}}{\brk{\Delta_{H{\brk{\Delta_H(a)}}}(b)}}
\]
extending the composite $g:H\into H{\brk{\Delta_H(a)}}\into H{\brk{\Delta_H(a)}}{\brk{\Delta_{H{\brk{\Delta_H(a)}}}(b)}}$ along the above $f$
such that $\dbtilde g(\Delta_H(b))=\Delta_{H{\brk{\Delta_H(a)}}}(b)$ and $\dbtilde g(\Delta_{H{\brk{\Delta_H(b)}}}(a))=\Delta_H(a)$.

Finally by uniqueness we obtain that both composites $\dbtilde f\circ\dbtilde g$ and $\dbtilde g\circ\dbtilde f$ are identities.
\end{proof}

Repeatedly using Corollary \ref{iso} we obtain that for any finite subset $S\subseteq H$ there is an embedding $H\subseteq H{\brk{\Delta(S)}}$, defined uniquely up to a unique isomorphism,
with $H{\brk{\Delta(S)}}$ possessing $\Delta(a)$ for all $a\in S$, and moreover for any two such subsets $S,T$ with $S\subseteq T$ there is a unique one-to-one homomorphism $H{\brk{\Delta(S)}}\into H{\brk{\Delta(T)}}$ preserving $H$ and all $\Delta(a)$ for $a\in S$.

We thus obtain a directed system of embeddings of algebras $H{\brk{\Delta(S)}}$ in the variety of $H$, indexed by all finite subsets of $H$.
Let $H{\brk\Delta}$ be the direct limit of this system. It is well known (see e.~g. \cite{Gratzer}*{Chapter 3, Exercises 34, 35}) that the limit of a directed diagram of algebras in any variety stays in the same variety and moreover that if all transition maps in a directed diagram are embeddings, then all of the induced homomorphisms to the limit are embeddings too. We thus obtain that $H{\brk\Delta}$ is in the variety of $H$ and all induced homomorphisms $H{\brk{\Delta(S)}}\to H{\brk\Delta}$ are embeddings.
In particular, the resulting map $H\to H{\brk\Delta}$ is also an embedding.

We note that $H{\brk\Delta}$ possesses $\Delta_{H{\brk\Delta}}(a)$ for all $a\in H$. Indeed any $\delta\in\sD_{H{\brk\Delta}}(a)$ is an element of some $H{\brk{\Delta(S)}}$,
hence also of $H{\brk{\Delta(S\cup\set{a})}}$, and belongs to $\sD_{H{\brk{\Delta(S\cup\set{a})}}}(a)$ there, thus $\Delta_{H{\brk{\Delta(S\cup\set{a})}}}(a)\le\delta$.
Consequently, the image of $\Delta_{H{\brk{\Delta(S\cup\set{a})}}}(a)$ has the needed properties for $\Delta_{H{\brk\Delta}}(a)$.

Next, iterating this construction, we obtain another directed system, now an $\omega$-chain, $H=H_0\into H_1\into H_2\into\cdots$,
with $H_{n+1}=H_n{\brk\Delta}$.

Let us denote the limit of this chain by $H^{\KM}$. We then have
\begin{Theorem}
The algebra $H^{\KM}$ is a \KM-algebra belonging to the variety of $H$, and $H$ embeds into it.
\end{Theorem}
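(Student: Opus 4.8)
The plan is to deal with the two ``soft'' assertions first and then focus on the real content, that $H^{\KM}$ is a reduct of a \KM-algebra. For membership in the variety of $H$: by induction every $H_n$ lies in $\mb V(H)$, the variety generated by $H$. Indeed $H_0=H$, and if $H_n\in\mb V(H)$ then $H_{n+1}=H_n\brk\Delta$ lies in the variety of $H_n$ — exactly as was shown above for the one-step tower $H\into H\brk\Delta$ — hence in $\mb V(H)$, since a variety is closed under $\mathrm H$, $\mathrm S$, $\mathrm P$. As $H^{\KM}$ is the direct limit of the $H_n$ and direct limits of algebras in a variety stay in that variety, $H^{\KM}\in\mb V(H)$. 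For the embedding: every transition map $H_n\into H_{n+1}$ is one-to-one, again as already established for $H\into H\brk\Delta$, so by the quoted property of directed systems with embedding transition maps, all the canonical maps $H_n\to H^{\KM}$ — in particular $H=H_0\to H^{\KM}$ — are embeddings. From now on I identify each $H_n$ with its image, so that $H^{\KM}=\bigcup_n H_n$.

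It remains, by the recalled criterion, to verify that $\sD_a(H^{\KM})$ is principal for every $a\in H^{\KM}$. The crux is a \emph{persistence} property: for each of the structural maps $K\into K\brk{\Delta(a')}$, $K\into K\brk{\Delta(S)}$ ($S\subseteq K$ finite), $K\into K\brk\Delta$ of the construction, and each $b$ in the source for which $\Delta(b)$ is already present, $\Delta(b)$ is again present in the target and is the image of the one in the source. For a single one-step enrichment this is \ref{onestepres}(c) of Theorem \ref{onestep} \emph{together with} the computation inside its proof, which exhibits the image $[c]$ of $c=\Delta_K(b)$ as the least element of $\sD_{\pi(b)}(K\brk{\Delta(a')})$ — equivalently, it is the instance of Corollary \ref{presdelta} for $\pi|_K$. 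The case of finite $S$ follows by iterating this along the chain of one-step enrichments that assembles $K\brk{\Delta(S)}$, Corollary \ref{iso} letting the order be chosen freely. For $K\brk\Delta$, which is a direct limit over the finite subsets of $K$, one runs the usual colimit argument: the image $e$ of $\Delta_K(b)$ lies in $\sD_b(K\brk\Delta)$ because density over $b$ is expressed by the equations $b\le e$ and $e\impl b=b$, which homomorphisms preserve; and any $d\in\sD_b(K\brk\Delta)$ already lives in some $K\brk{\Delta(S)}$, where it still satisfies those equations since $K\brk{\Delta(S)}\into K\brk\Delta$ is an embedding, so $\Delta_{K\brk{\Delta(S)}}(b)\le d$ there by the finite case, whence $e\le d$.

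Granting persistence, I conclude as follows. Given $a\in H^{\KM}$, choose $n$ with $a\in H_n$; since $a$ lies in the base of $H_{n+1}=H_n\brk\Delta$, the element $\Delta_{H_{n+1}}(a)$ exists, and I let $e\in H^{\KM}$ be its image. Applying persistence along $H_{n+1}\into H_{n+2}\into\cdots$ shows that each $H_m$ with $m\ge n+1$ possesses $\Delta_{H_m}(a)$, with image $e$ in $H^{\KM}$. Now $e\in\sD_a(H^{\KM})$, because the homomorphism $H_{n+1}\to H^{\KM}$ carries $a\le\Delta_{H_{n+1}}(a)$ and $\Delta_{H_{n+1}}(a)\impl a=a$ to the corresponding equations for $e$. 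And $e$ is the least element of $\sD_a(H^{\KM})$: any $d\in\sD_a(H^{\KM})$ lies in some $H_m$ with $m\ge n+1$, and since $H_m\into H^{\KM}$ is an embedding it reflects $a\le d$ and $d\impl a=a$, so $d\in\sD_a(H_m)$, whence $\Delta_{H_m}(a)\le d$ in $H_m$ and therefore $e\le d$ in $H^{\KM}$. Thus $\sD_a(H^{\KM})$ is principal (with least element $e$), and by the recalled criterion $H^{\KM}$ is a reduct of a \KM-algebra; its modality is given by $\Delta_{H^{\KM}}(a)=e$ for each $a\in H^{\KM}$.

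The step I expect to be the main obstacle is the persistence property: one must read out of \ref{onestepres}(c) not merely that $\sD_{\pi(b)}$ stays principal but that its smallest element is the \emph{image} of the previous one, and then carry this identification through the two nested layers of direct limit — first $K\brk\Delta$ over the finite subsets of $K$, then the $\omega$-tower defining $H^{\KM}$ — relying at each stage on the fact that every element of a direct limit, and hence every element of any $\sD_b$ in it, already occurs at a finite stage. Once this bookkeeping is arranged, the remaining ingredients are only the standard closure of varieties under direct limits and the behaviour of embeddings into direct limits, both already invoked in building the tower.
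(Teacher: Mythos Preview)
Your proof is correct and follows essentially the same route as the paper: both reduce everything to showing that the image of $\Delta_{H_{n+1}}(a)$ serves as $\Delta_{H^{\KM}}(a)$, via the same colimit argument already used for $H\brk\Delta$. The paper compresses this into ``arguing as above for $H{\brk\Delta}$'' and ``the rest is clear from the construction,'' whereas you have made the underlying persistence property explicit and tracked it through both layers of direct limit; this is exactly the bookkeeping the paper's terse proof is relying on.
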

\begin{proof}
Any element $\alpha\in H^{\KM}$ lies in some $H_n$, so $H_{n+1}$ possesses $\Delta_{H_{n+1}}(\alpha)$. Then arguing as above for $H{\brk\Delta}$,
we see that the image of $\Delta_{H_{n+1}}(\alpha)$ in $H^{\KM}$ has the defining properties of $\Delta_{H^{\KM}}(\alpha)$. The rest is clear from the construction.
\end{proof}

\section{Relation to Muravitsky's construction}

In \cite{Muravitsky}, at the first step the following construction is used.
Let us denote the Stone embedding of $H$ into the Heyting algebra
of all upper sets of the poset (with the inclusion order) $\mathrm{Spec}(H)$ of all prime filters of $H$ by $\sigma:H\to\mathrm{Up}(\mathrm{Spec}(H))$, i.~e. $\sigma(h)=\setof{\mathfrak p\in\mathrm{Spec}(H)}{h\in\mathfrak p}$ for $h\in H$.

For each $a\in H$, Muravitsky uses the embedding $H\into\delta[H_a]$, where $\delta[H_a]$ is the subalgebra of $\mathrm{Up}(\mathrm{Spec}(H))$
generated by $\sigma(H)$ and one more element
\[
\sigma(a)_+:=\sigma(a)\cup\max(\mathrm{Spec}(H)\setminus\sigma(a)).
\]

As mentioned in the introduction, his proof that $\delta[H_a]$ belongs to the variety of $H$ is essentially proof-theoretic,
and it is desirable to relate his $\delta[H_a]$ to our $H{\brk{\Delta(a)}}$.

We do not have at our disposal a homomorphism $H{\brk\iota}\to\mathrm{Up}(\mathrm{Spec}(H))$ extending $\sigma$ along $H\into H{\brk\iota}$
and sending $\iota$ to $\sigma(a)_+$, since $\mathrm{Up}(\mathrm{Spec}(H))$ might be outside the variety of $H$.
Such homomorphism, with image $\delta[H_a]$ (and factoring through $H{\brk\iota}\onto H{\brk{\Delta(a)}}$) would indeed exist, if the following statement were true.

\ 

For any \IPC-formula $\ph(p_0,p_1,...,p_n)$ and any $h_1, ...,h_n\in H$ with
\[
\ph(\sigma(a)_+,\sigma(h_1),...,\sigma(h_n))\ne\mathrm{Spec}(H)
\]
there is a $d\in H$ with $\sigma(a)_+\subseteq\sigma(d)$ and $\ph(\sigma(d),\sigma(h_1),...,\sigma(h_n))\ne\mathrm{Spec}(H)$.

\ 

We do not know whether this holds.

\begin{bibdiv}
\begin{biblist}

\bib{Esakia}{article}{
    AUTHOR = {Esakia, Leo},
     TITLE = {The modalized {H}eyting calculus: a conservative modal
              extension of the intuitionistic logic},
   JOURNAL = {J. Appl. Non-Classical Logics},
    VOLUME = {16},
      YEAR = {2006},
    NUMBER = {3-4},
     PAGES = {349--366},
       DOI = {\href{https://doi.org/10.3166/jancl.16.349-366}{\texttt{10.3166/jancl.16.349-366}}}
}

\bib{Gratzer}{book}{
    AUTHOR = {Gr\"{a}tzer, George},
     TITLE = {Universal algebra},
      NOTE = {DOI \href{https://doi.org/10.1007/978-0-387-77487-9}{\texttt{10.1007/978-0-387-77487-9}}},
 PUBLISHER = {Springer, New York},
      YEAR = {2008 (second edition), xx+586 pages}
}

\bib{Kuznetsov}{article}{
    AUTHOR = {Kuznetsov, A. V.},
     TITLE = {The proof-intuitionistic propositional calculus},
   JOURNAL = {Dokl. Akad. Nauk SSSR},
    VOLUME = {283},
      YEAR = {1985},
    NUMBER = {1},
     PAGES = {27--30},
  language = {Russian},
      note = {Electronic copy available at \href{https://www.mathnet.ru/rus/dan/v283/i1/p27}{\texttt{https://www.mathnet.ru/rus/dan/v283/i1/p27}}}
}

\bib{Muravitsky}{misc}{
      title={On one embedding of {H}eyting algebras}, 
      author={Alexei Muravitsky},
      year={2019},
      note={\href{https://doi.org/10.48550/arXiv.1705.02728}{\texttt{arXiv:1705.02728 [math.LO]}}}
}

\end{biblist}
\end{bibdiv}

\end{document}